 \newtheorem{theorem}{Theorem}[section]
 \newtheorem{corollary}[theorem]{Corollary}
 \newtheorem{lemma}[theorem]{Lemma}
\theoremstyle{definition}
\theoremstyle{remark}
\newtheorem{fact*}{Fact}
\newcommand{\hilbert}{\mathcal{H}}
\newcommand{\G}{\mathcal{G}}
\newcommand{\BH}{\mathcal{B}(\mathcal{H})}
\newcommand{\R}{\mathbb{R}}
\newcommand{\norm}[1]{\left\Vert#1\right\Vert}
\newcommand{\ip}[2]{\left\langle #1, #2 \right\rangle}
\newcommand{\ad}{^\ast}
\newcommand{\inv}{^{-1}}
\newcommand{\til}{\raise.17ex\hbox{$\scriptstyle\mathtt{\sim}$}}
\newcommand\beq{\begin{equation}}
\newcommand\eeq{\end{equation}}
\newcommand{\bbm}{\left[ \begin{smallmatrix}}
\newcommand{\ebm}{\end{smallmatrix} \right]}
\newcommand{\bpm}{\begin{pmatrix}}
\newcommand{\epm}{\end{pmatrix}}
\numberwithin{equation}{section}
\newlength{\Mheight}
\newlength{\cwidth}
\newcommand{\dfn}[1]{{\bf #1}\index{#1}}
\title[Graphical functions]{Matrix convex verbatim enumeration functions are graphical}
\author[J. E. Pascoe]{
J. E. Pascoe$^\dagger$
}
\address{Department of Mathematics\\
Drexel University\\
3141 Chestnut St \\
 Philadelphia, PA 19104}
\email[J. E. Pascoe]{james.eldred.pascoe@drexel.edu}
\thanks{$\dagger$ Partially supported by National Science Foundation DMS Analysis Grant 2319010}
\author[R. Tully-Doyle]{
Ryan Tully-Doyle$^\ddagger$
}
\address{Mathematics Department \\
Cal Poly, SLO\\
1 Grand Ave \\
San Luis Obispo, CA 93407}
\email[R. Tully-Doyle]{rtullydo@calpoly.edu}
\thanks{$\ddagger$ Partially supported by National Science Foundation DMS Analysis Grant 2055098}
\date{\today}
\subjclass[2020]{15A09, 47A56}
\keywords{}
\begin{document}

\begin{abstract}
We give a relation between verbatim generating functions of what we call Pythagorean languages and matrix convexity. Namely, several multivariate matrix convex functions occurring in the existing matrix analysis literature arise naturally in a combinatorial way. 

We give a Gelfand type formula for the numerical radius.
\end{abstract}

\maketitle

\section{Introduction}

Let $\G$ be an alphabet consisting of formal symbols and equipped with an involution $*$ taking $a\in G \mapsto a\ad$. If $a = a\ad$, then $a$ is a \dfn{self-adjoint} element. Otherwise, we refer to $a$ as an \dfn{analytic} element. We can extend the involution on $\G$ to the collection of all words in $G$ in the natural way:
\[
(a_1 a_2 a_3 \ldots a_g)\ad = a_g\ad \ldots a_3\ad a_2\ad a_1 \ad
\]
A \dfn{formal language} $\mathcal{L}$ is a collection of words in $\G$. We call a language \dfn{self-adjoint} if it is closed under the $*$ operation. A self-adjoint language $\mathcal L$ is \dfn{Pythagorean} if for nonempty words $\alpha, \beta, \gamma$, whenever $\beta\ad \alpha $ and $\gamma\ad \alpha$ are in $\mathcal L$ then $\alpha\ad \alpha$ and $\gamma\ad \beta$ are in $\mathcal L$. 

The set of \dfn{ellipses} of $\mathcal L$, denoted $E_{\mathcal{L}}$, is the set of words $\alpha$ in $G$ such that there exists a word $\beta$ in $G$ so that $\beta\ad \alpha \in \mathcal L$. Two ellipses $\alpha, \beta$ are \dfn{morphologically equivalent} if $\beta\ad \alpha \in \mathcal L$, which we denote $\alpha \cong \beta$. Note for a Pythagorean language, this is an equivalence relation on $E_\mathcal{L}$. We say there is an \dfn{$a_i$-edge} from $\alpha \in E_\mathcal{L}$ to $\beta \in E_\mathcal{L}$ if $\beta^* a_i \alpha \in \mathcal{L}.$ (The edge is undirected if $a_i$ is self-adjoint and directed otherwise.) That is, we get a natural graph structure for each symbol on the set of ellipses. 

Examples include the set of balanced brackets (e.g. ``$[[]][]$'') and the set of irreducible balanced brackets (e.g. ``$[[[]][]]$'', which cannot be written as a product of smaller brackets).  Note that the previous examples can be naturally understood as Dyck paths.
\begin{figure}[htbp]
  \centering
  \begin{tikzpicture}[scale=0.5]
    \draw[gray,very thin] (0,0) grid (10,5);
    \draw[->] (0,0) -- (10,0) node[right] {$x$};
    \draw[->] (0,0) -- (0,5) node[above] {$y$};
  
    \draw[ultra thick, red] (0,0) -- (1,1) -- (2,2) -- (3,3) -- (4,2) -- (5,1) -- (6,2) -- (7,3) -- (8,2) -- (9,1) -- (10,0);
    
  \end{tikzpicture}
  \caption{Example of a Dyck path corresponding to the string ``$[[[]][[]]]$''.}
  \label{fig:dyckpath}
\end{figure}
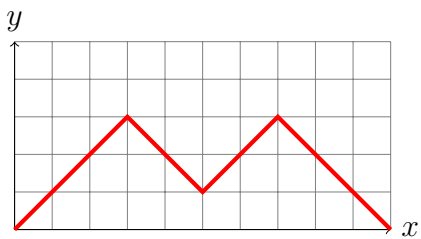
A Dyck path is a lattice path in the upper half-plane consisting of unit steps up or down in the $y$ direction, starting and ending at the same $y=0$, and never crossing below the x-axis.

 Irreducible Motzkin paths can be expressed similarly with the addition of a self-adjoint symbol $x$ (e.g. ``$[[x]x[]]$'').
 
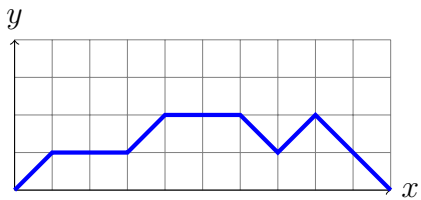
\begin{figure}[htbp]
  \centering
  \begin{tikzpicture}[scale=0.5]
    \draw[gray,very thin] (0,0) grid (10,4);
    \draw[->] (0,0) -- (10,0) node[right] {$x$};
    \draw[->] (0,0) -- (0,4) node[above] {$y$};
  
    \draw[ultra thick, blue] (0,0) -- (1,1) -- (2,1) -- (3,1) -- (4,2) -- (5,2) -- (6,2) -- (7,1) -- (8,2) -- (9,1) -- (10,0);
    
  \end{tikzpicture}
  \caption{Example of a Motzkin path, which corresponds to ``$[xx[xx][]]$'' in our string formulation.}
  \label{fig:motzkinpath}
\end{figure}
A Motzkin path is a lattice path that consists of unit steps up and down in the $y$-direction, as well as horizontal steps with no change in the $y$-coordinate such that the path starts and ends on the $x$-axis, and it never goes below the $x$-axis. We will discuss these examples and more in Section \ref{sec_examples}.

The \dfn{verbatim enumeration function} of $\mathcal L$ is the formal sum over all words in the language
\[
f(a_1, \ldots, a_g) = \sum_{\omega \in \mathcal{L}}\omega.
\]
We show that the verbatim enumeration function $f$ corresponding to a Pythagorean language $\mathcal L$ is matrix convex. A corresponding representation of $f$, the so-called butterfly realization, arises from a natural (directed) graph with chromatic edges. We will also see that many important functions arise this way, including the Ando unitarization function arising in the proof of Crouzeix's conjecture for the disk \cite{bcd, ando73}, the Anderson-Morley-Trapp function \cite{amt}, and, more obliquely, the matrix geometric mean \cite{ando94, pusz, palfiaadv}. Such functions also provide important examples in the study of boundary regularity in several complex variables, as in \cite{amchvms, bps1, bickel2020level, kn08ub, knese15, pascoePEMS, tdopmat}.

\section{Matrix convex functions}

Verbatim enumeration functions can be evaluated by replacing the indeterminants with appropriate matrices, so that self-adjoint matrices are substituted into self-adjoint indeterminants. This is part of the subject of free noncommutative analysis (see \cite{vvw12} for a comprehensive survey). 

We are concerned with such functions that are \dfn{matrix convex}. The particular approach we take here, a consequence of the so-called royal road theorem, is considered in great generality in \cite{royal}. 
 Let $D$ be the neighborhood of $0$ defined by
\[
D = \{ A: \norm{A} \leq r\}
\]
where $A$ is a $d$-tuple of matrices of the same size, and the norm is the operator norm on tuples of matrices (that is, $D$ consists of $d$-tuples of $n \times n$ matrices for all integers $n$. (This is an example of a convex free set whose study was intiated in \cite{effwink}.) For a function $f$ that maps the self-adjoint tuples in $D$ into self-adjoint matrices, we say that $f$ is matrix convex on $D$ if 
\[
f\left(\frac{A + B}{2}\right) \leq \frac{f(A) + f(B)}{2}
\]
for all $A, B$ of the same size in $D$. (Here we say  that $E\leq F$ for $n$ by $n$ matrices $E,F$ if $F-E$ is positive semidefinite. This is sometimes called the \dfn{L\"owner ordering}.) A consequence of the royal road theorem is if $f$ is matrix convex on $D$, then $f$ has a convergent power series on $D$ given by
\[
f(X) = \sum_{\omega} c_\omega X^\omega,
\]
where $\omega$ is words in $a_1, \ldots, a_d$, and the entries $A_i$ are substituted into the appropriate $a_i$. 

Matrix convex functions have formulas in terms of Hilbert space objects called \dfn{realizations}. In one variable, this is the classical Kraus representation, which says that a matrix convex function $f:[-1,1]\to \R$ has the form 
\[
f(x) = a + bx + \int_{[-1,1]} \frac{x^2}{1 + tx} \, d\mu(x) 
\]
for constants $a \in \R, b \geq 0$ and a finite Borel measure $\mu$ supported on $[-1,1]$. Matrix convex functions in several variables possess a \dfn{butterfly realization} (see \cite{heltonbutterfly, royal}). The following theorem is a version of Theorem 4.4 in  \cite{royal}. (The point in \cite{royal, heltonbutterfly} being that if one formally takes the Hessian of the butterfly realization below, that one obtains a sum of Hermitian squares which is thus obviously positive, and thus functions of such a form can be viewed as obviously matrix convex.)

\begin{theorem}[\cite{royal}]
Let $f$ be matrix convex with a power series converging absolutely and uniformly on a neighborhood $D$ of $0$. Then there exist contractions $T_i$, vectors $Q_i$, a scalar $a_0$, and a continuous linear function $L$ so that 
\beq\label{eq_kraus}
f(X) = a_0 + L(X) + (\sum Q_i X_i)\ad(I - \sum T_i X_i)\inv (\sum Q_i X_i)
\eeq
\end{theorem}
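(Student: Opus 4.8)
The plan is to pass to the Hessian of $f$, extract from its coefficients a positive semidefinite Hankel-type form, and then run a lurking-isometry (GNS) construction that realizes those coefficients through a single auxiliary Hilbert space whose generating operators are contractions; resumming a geometric series produces the resolvent $(I - \sum T_i X_i)\inv$. First I would normalize. Set $a_0 = f(0)$ and let $L(X)$ be the first-order term of $f$; since adding or subtracting an affine-linear function leaves the Hessian unchanged, the function $g = f - a_0 - L$ is again matrix convex, vanishes to second order, and has a power series $g(X) = \sum_{|\omega|\ge 2} c_\omega X^\omega$ converging absolutely and uniformly on $D$. Expanding the target form shows it suffices to produce a Hilbert space $\K$, contractions $T_i \in \BK$, and vectors $Q_i \in \K$ so that, for every word $\omega = a_{i_0}a_{i_1}\cdots a_{i_m}a_{i_{m+1}}$ of length at least two,
\beq\label{eq_repro}
c_\omega = \ip{T_{i_1}\cdots T_{i_m} Q_{i_{m+1}}}{Q_{i_0}},
\eeq
because then $\sum_{m\ge 0} Q(X)\ad T(X)^m Q(X) = (\sum Q_i X_i)\ad(I - \sum T_i X_i)\inv(\sum Q_i X_i)$ recovers $g$ term by term, and re-adding $a_0 + L(X)$ gives \eqref{eq_kraus}.

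Next I would encode matrix convexity as positivity of the Hessian. Matrix convexity of $g$ on $D$ is equivalent to $g''(X)[H,H] \succeq 0$ for all self-adjoint tuples $X \in D$ and all self-adjoint directions $H$ of the same size. Expanding, each word $\omega$ contributes interior terms of the form $X^\mu H_i X^\nu H_j X^\tau$; the presence of the interior word $X^\nu$ between the two occurrences of $H$ is precisely what prevents a constant Gram representation and what will later force the resolvent. The crucial step is the noncommutative positivity principle of \cite{royal, heltonbutterfly}: because the Hessian is positive for substitutions of matrices of all sizes $n$, the \emph{coefficient form} $\langle e_u, e_v\rangle := c_{v\ad u}$, defined on the free vector space $\W$ spanned by nonempty words, is positive semidefinite. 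One sanity check I would keep in mind is that for $g(x)=x^2$ this form is rank one and yields $T=0$, $Q_a$ a unit vector, while for a single Kraus kernel $x^2/(1+tx)$ it is again rank one with $T_a$ acting as multiplication by $-t$; by contrast $x^4$ produces an indefinite Hankel form, matching the fact that $x^4$ is not matrix convex.

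Given that $\langle\cdot,\cdot\rangle$ is positive semidefinite, I would complete $\W$ modulo the kernel $\N$ of the form to obtain $\K$, and define $T_i$ to be the operator prepending the letter $a_i$ and $Q_i$ the class of the one-letter word $a_i$. Positivity makes the inner product, hence the passage to $\K$, well defined; the absolute and uniform convergence of the series on $\{\norm{X}\le r\}$ supplies the growth bound that makes each $T_i$ a contraction after the harmless rescaling $T_i \mapsto rT_i$, $X_i \mapsto X_i/r$ (equivalently, that $\sum T_i X_i$ is a strict contraction on $D$, so the resolvent converges). A direct computation then verifies the reproducing identity \eqref{eq_repro}, since $T_{i_1}\cdots T_{i_m} Q_{i_{m+1}} = [e_{a_{i_1}\cdots a_{i_m}a_{i_{m+1}}}]$ and pairing against $Q_{i_0} = [e_{a_{i_0}}]$ returns $c_{a_{i_0}a_{i_1}\cdots a_{i_{m+1}}}$.

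The hard part will be the positivity transfer together with contractivity. Passing from ``the Hessian is positive at every matrix point'' to ``the abstract coefficient form $c_{v\ad u}$ is positive semidefinite'' is the genuinely noncommutative step, and it must be carried out simultaneously with the estimate that forces the shifts $T_i$ to be bounded contractions; both ingredients rely essentially on convexity holding at all matrix sizes and on uniform convergence on $D$. By contrast, the converse is the easy observation noted before the statement: formally the Hessian of the right-hand side of \eqref{eq_kraus} is a sum of Hermitian squares, hence manifestly positive, so that butterfly-realizable functions are automatically matrix convex.
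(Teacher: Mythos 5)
Your proposal takes essentially the same route as the paper, which itself states this theorem as a citation of Theorem 4.4 in \cite{royal} rather than reproving it: matrix convexity yields positivity of the Hankel-type coefficient form $[c_{\beta\ad \alpha}]$ (the very lemma the paper quotes from the same source), a GNS/lurking-isometry construction on the span of nonempty words produces the shifts $T_i$ and vectors $Q_i$, resumming the Neumann series gives \eqref{eq_kraus}, and the converse follows from the Hermitian-squares form of the Hessian. Your sketch also correctly isolates, without proving, the two technical steps that \cite{royal} carries out in detail --- the transfer from Hessian positivity at all matrix sizes to positivity of the abstract coefficient form, and the well-definedness/contractivity of the shift operators on the GNS quotient --- which is the same division of labor the paper adopts.
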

We note that $Q_i\ad T^\alpha Q_j$ is equal to the coefficient of the word $a_i^*a^\alpha a_j.$ Note that
this implies that \[
C = \left[ c_{\beta\ad \alpha} \right]_{\alpha, \beta} \geq 0
\] is positive semidefinite. Our goal will be to see that when such a positive semidefinite $C$ is a $0-1$ matrix-- they will turn out to the be generating functions of Pythagorean language.

Matrix convexity is used to establish the positivity of a Hankel matrix, which in turn is used to derive the butterfly realization formula. We note that the reverse implication follows from algebraically expanding out
the butterfly realization, although that is not totally explicit in \cite{royal}.
\begin{lemma}[\cite{royal}]
A function $f$ is matrix convex on $D$ if and only if the block matrix 
\[
C = \left[ c_{\beta\ad \alpha} \right]_{\alpha, \beta} \geq 0,
\]
where $\alpha, \beta$ range over words of positive length.
\end{lemma}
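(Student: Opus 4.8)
The plan is to route both implications through the noncommutative Hessian. First I would reduce matrix convexity to a second-order condition: for self-adjoint $X \in D$ and self-adjoint $H$ of arbitrary size, the scalar function $t \mapsto \langle \xi, f(X+tH)\xi\rangle$ is midpoint convex (by hypothesis) and continuous (since the power series converges absolutely and uniformly on $D$), hence convex, hence has nonnegative second derivative; running $\xi$ over all vectors and recalling that the domain is a convex free set shows that matrix convexity on $D$ is equivalent to
\[
\mathrm{Hess}\,f(X)[H] \;=\; \frac{d^2}{dt^2}\Big|_{t=0} f(X+tH) \;=\; 2\sum_{i,j}\ \sum_{\mu,\nu,\rho} c_{\mu\, \ell_i\, \nu\, \ell_j\, \rho}\, X^\mu H_i X^\nu H_j X^\rho \;\succeq\; 0
\]
for all such $X,H$, where each marked letter $\ell_i$ is either $a_i$ or $a_i^*$ (both collapsing to the self-adjoint direction $H_i$) and the inner sum runs over all factorizations into words $\mu,\nu,\rho$. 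Both directions of the lemma are extracted from the positivity of this one expression over all matrix sizes.

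For the forward direction I would show that every finite principal compression $\big[c_{(\alpha^{(l)})^*\alpha^{(k)}}\big]_{k,l}$ of $C$ is itself a compression of a positive Hessian. Fix finitely many words $\alpha^{(1)},\dots,\alpha^{(m)}$ of positive length and a vector $v\in\C^m$. The idea is to evaluate on a finite-dimensional module built to implement word multiplication: choose nilpotent shift-type tuples $X$ so that the monomials $X^\nu$ act as transport along words, and choose a rank-constrained $H$ so that the only surviving monomials $X^\mu H_i X^\nu H_j X^\rho$ are those with $\mu,\rho$ empty and whose coefficient word equals $(\alpha^{(l)})^*\alpha^{(k)}$ (the two $H$-insertions sitting at a leading adjoint letter and a trailing analytic letter, the middle transport splitting $\nu$ across the seam). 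Compressing against the boundary vector $\xi=\sum_k v_k\, e_{\alpha^{(k)}}$ then yields
\[
\tfrac12\,\langle \xi,\ \mathrm{Hess}\,f(X)[H]\,\xi\rangle \;=\; \sum_{k,l}\overline{v_l}\,v_k\, c_{(\alpha^{(l)})^*\alpha^{(k)}} \;=\; v^*\, C_{\{\alpha^{(k)}\}}\, v ,
\]
so Hessian positivity forces each such compression, and hence $C$, to be positive semidefinite. I expect the construction of this module to be the main obstacle: one must simultaneously annihilate the outer $X$-blocks and arrange the two $H$-insertions to encode an \emph{arbitrary} pair of words meeting at a movable seam. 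This is precisely where passing to all matrix sizes is essential, since no single fixed dimension can separate all of the coefficients $c_{\beta^*\alpha}$ at once.

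For the reverse direction I would factor and expand. Since $C\geq 0$, a Gram (GNS) factorization produces a Hilbert space $\M$, vectors $Q_i$ coming from the length-one words, and operators $T_i$ defined by the Hankel shift structure on $\M$, normalized to contractions using the boundedness of $C$ that follows from absolute convergence of the series on $D$; these reconstruct the coefficients via $c_{a_i^* a^\alpha a_j}=Q_i\ad T^\alpha Q_j$ and assemble into the butterfly realization \eqref{eq_kraus}. It then remains to differentiate that closed form twice: writing $R(X)=\sum Q_i X_i$, $T(X)=\sum T_i X_i$, and $S=(I-T(X))\inv$, a direct computation expresses $\mathrm{Hess}\,f(X)[H]$ as a sum of Hermitian squares in the entries of $W=R(H)+T(H)SR$, which is manifestly positive semidefinite on $D$. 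This is exactly the routine algebraic expansion invoked in the remark after the realization theorem (``formally taking the Hessian yields a sum of Hermitian squares''), and it closes the equivalence.
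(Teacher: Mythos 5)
Your architecture is the same as the paper's (which defers to \cite{royal} and only sketches the structure): forward direction by reducing matrix convexity to positivity of the noncommutative Hessian and then harvesting the Hankel matrix from evaluations on purpose-built finite-dimensional tuples; reverse direction by Gram/GNS factorization of $C$ into the butterfly realization \eqref{eq_kraus} and expanding its Hessian as a sum of Hermitian squares. Your reverse direction is sound and is exactly the ``algebraically expanding out the butterfly realization'' remark in the paper, and your reduction of convexity to the second-order condition is standard and correct.

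The gap is in the step you yourself flag as the main obstacle, and it is not merely technical: the claim that one can choose $X$ and $H$ so that \emph{the only surviving monomials} $X^\mu H_i X^\nu H_j X^\rho$ are the seam terms with $\mu,\rho$ empty and word $(\alpha^{(l)})^*\alpha^{(k)}$ is impossible in general. The obstruction is that adjoint letters are not free: any evaluation must set $X_{a^*}=X_a^*$ and $H_{a^*}=H_a^*$. If your module transports ``down'' $\alpha^{(k)}$ to a seam and ``up'' $(\alpha^{(l)})^*$ to a top, i.e.\ $\langle p_l, X^{(\tilde\alpha^{(l)})^*} s\rangle\neq 0$, then automatically $\langle X^{\tilde\alpha^{(l)}}p_l,s\rangle\neq 0$, so the reverse transport is forced to be nonzero as well. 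Consequently round-trip words such as $(\alpha^{(l)})^*\,\tilde\alpha^{(l)}\,(\tilde\alpha^{(l)})^*\,\alpha^{(k)}$ also yield surviving monomials with $\mu,\rho$ empty; since $f$ is an arbitrary matrix convex function, the series may contain such words with nonzero coefficients, and your displayed identity $\tfrac12\langle \xi, \mathrm{Hess}\,f(X)[H]\,\xi\rangle = v^* C_{\{\alpha^{(k)}\}} v$ is then false for every fixed choice of $X,H$. The standard repair is a leading-order extraction: evaluate at $\epsilon X$, rescale $v_k\mapsto \epsilon^{-(|\alpha^{(k)}|-1)}v_k$, and let $\epsilon\to 0$. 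The wanted seam terms are then $\epsilon$-independent, every round-trip contribution carries a strictly positive power of $\epsilon$, and absolute convergence of the power series on $D$ justifies passing to the limit, so positivity of the Hessian for all small $\epsilon>0$ forces $v^* C_{\{\alpha^{(k)}\}} v\geq 0$. With that device (or an equivalent grading argument) inserted, your forward direction closes and the proof matches the cited one; without it, the key step fails.
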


We begin by establishing that a Hankel-type matrix consisting of the coefficients of the function $f$ that enumerates a Pythagorean language $\mathcal L$ is positive semi-definite. Note that we derive the positivity directly (without the assumption of matrix positivity).

\begin{lemma}\label{lemma_pos}
Let $\mathcal L$ be a formal language. The infinite matrix
\[
C = [c_{\beta\ad\alpha}]_{\alpha, \beta \in \mathcal L} = [1_L(\beta\ad\alpha)]_{\alpha, \beta \in \mathcal L} \geq 0
\]
if and only if
$\mathcal{L}$ is Pythagorean.
\end{lemma}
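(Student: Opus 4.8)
The plan is to show that positive semidefiniteness of the $0$–$1$ matrix $C$ is equivalent to morphological equivalence $\cong$ (defined by $\alpha \cong \beta \iff \beta\ad\alpha \in \mathcal L$) being a genuine equivalence relation on the ellipses $E_{\mathcal L}$, and to observe that this equivalence-relation property is precisely the Pythagorean condition. Throughout, note that $C \geq 0$ presupposes that $C$ is Hermitian, which for a $0$–$1$ matrix means $1_{\mathcal L}(\beta\ad\alpha) = 1_{\mathcal L}(\alpha\ad\beta)$ for all positive-length words $\alpha,\beta$; this is exactly self-adjointness of $\mathcal L$ on words of length at least two, so symmetry of $C$ is built into both sides of the equivalence.

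For the direction ($\Leftarrow$), I would assume $\mathcal L$ is Pythagorean and first verify that $\cong$ is an equivalence relation on $E_{\mathcal L}$. Reflexivity at an ellipsis $\alpha$ follows by applying the Pythagorean axiom to a witnessing word $\beta$ with $\beta\ad\alpha \in \mathcal L$, taking $\gamma = \beta$, which yields $\alpha\ad\alpha \in \mathcal L$; symmetry is self-adjointness; and transitivity follows by rewriting $\alpha \cong \beta$ as $\alpha\ad\beta \in \mathcal L$ via self-adjointness and then applying the axiom with $\beta$ as the common right-hand factor (so $\alpha\ad\beta, \gamma\ad\beta \in \mathcal L$ forces $\gamma\ad\alpha \in \mathcal L$). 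Once $\cong$ is an equivalence relation, I would note that any word which is not an ellipsis contributes an identically zero row and column, since a nonzero entry $c_{\beta\ad\alpha}$ would exhibit both $\alpha$ and $\beta$ as ellipses. After grouping the ellipses by equivalence class, $C$ becomes block diagonal with each block an all-ones matrix $J$ (padded by zero rows for the non-ellipses). Since $J \geq 0$ and a direct sum of positive semidefinite blocks is positive semidefinite, $C \geq 0$.

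For the direction ($\Rightarrow$), I would assume $C \geq 0$ and let $\alpha,\beta,\gamma$ be nonempty words with $\beta\ad\alpha, \gamma\ad\alpha \in \mathcal L$, aiming to produce $\alpha\ad\alpha \in \mathcal L$ and $\gamma\ad\beta \in \mathcal L$. The tool is that every principal minor of a positive semidefinite matrix is nonnegative. The $2\times 2$ principal minor indexed by $(\alpha,\beta)$ equals $c_{\alpha\ad\alpha}c_{\beta\ad\beta} - c_{\beta\ad\alpha}c_{\alpha\ad\beta} = c_{\alpha\ad\alpha}c_{\beta\ad\beta} - 1 \geq 0$, so the $0$–$1$ constraint forces $c_{\alpha\ad\alpha} = c_{\beta\ad\beta} = 1$, giving $\alpha\ad\alpha \in \mathcal L$; the analogous minor on $(\alpha,\gamma)$ gives $c_{\gamma\ad\gamma} = 1$. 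Writing $x = c_{\gamma\ad\beta} = c_{\beta\ad\gamma} \in \{0,1\}$, the $3\times 3$ principal minor on $(\alpha,\beta,\gamma)$ has all entries equal to $1$ except the two symmetric entries equal to $x$, and
\[
\det\begin{pmatrix} 1 & 1 & 1 \\ 1 & 1 & x \\ 1 & x & 1 \end{pmatrix} = -(1-x)^2.
\]
Nonnegativity of this minor forces $x = 1$, i.e.\ $\gamma\ad\beta \in \mathcal L$, completing the Pythagorean condition.

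The minor computations are routine. The step requiring the most care is the transitivity verification in the ($\Leftarrow$) direction, where the axiom must be applied with the correct word in the role of the common right-hand factor, using self-adjointness to reposition the adjoints. The conceptual heart of the ($\Rightarrow$) direction is the observation that the relevant $3\times 3$ determinant is the perfect negative square $-(1-x)^2$, so that positivity of the minor rigidly pins the undetermined entry to $1$ rather than merely bounding it; I expect this to be the one genuinely illuminating point of the argument.
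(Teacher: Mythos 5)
Your proof is correct, and it does strictly more than the paper's own proof. Your ($\Leftarrow$) direction is essentially the paper's argument in different clothing: the paper builds a Hilbert space with one orthonormal basis vector per $\cong$-equivalence class of ellipses and exhibits $C$ as a Gram matrix, which is exactly your block-diagonal decomposition into all-ones blocks (one per class) padded by zero rows and columns for non-ellipses. You are more careful on two points the paper merely asserts: you verify from the Pythagorean axiom that $\cong$ is reflexive and transitive (the paper records this as an unproved ``Note'' after the definition, and its Gram construction silently depends on it), and you explicitly dispose of words that are not ellipses. The ($\Rightarrow$) direction, by contrast, is content the paper's proof does not contain at all: the printed proof establishes only ``Pythagorean implies $C \geq 0$'' and is silent on the converse, even though the lemma is stated as an equivalence and the converse is precisely what the subsequent theorem needs to pass from matrix convexity (which gives $C \geq 0$ by the preceding lemma quoted from the royal road paper) back to the Pythagorean property. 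Your principal-minor argument fills this gap correctly: the $2\times 2$ minors force $c_{\alpha\ad\alpha} = c_{\beta\ad\beta} = c_{\gamma\ad\gamma} = 1$, and the $3\times 3$ determinant $-(1-x)^2$ rigidly pins $c_{\gamma\ad\beta} = 1$; the negative perfect square is indeed the crux.

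One caveat, which is a defect of the lemma as stated rather than of your argument, and which your symmetry remark brushes against without fully resolving: $C$ involves only words of length at least two, so $C \geq 0$ cannot constrain length-one words in any way. Since the paper's definition of Pythagorean requires $\mathcal L$ itself to be self-adjoint, the language $\mathcal L = \{a\}$ with $a$ analytic satisfies $C = 0 \geq 0$ yet is not Pythagorean, so the ``only if'' direction is literally false without a convention excluding or symmetrizing single-letter words. Your phrase ``self-adjointness of $\mathcal L$ on words of length at least two'' is the honest statement of what is provable, but it is strictly weaker than the paper's definition and should be flagged as such rather than folded into ``built into both sides.'' Relatedly, you read the index set as all words of positive length, whereas the displayed statement literally writes $\alpha, \beta \in \mathcal L$; your reading is the correct one, both because it matches the cited criterion for matrix convexity and because under the literal indexing the equivalence fails outright (for $\mathcal L = \{a\ad b,\, b\ad a\}$, which is self-adjoint but not Pythagorean, every product $\beta\ad\alpha$ with $\alpha, \beta \in \mathcal L$ has length four and lies outside $\mathcal L$, so that matrix is $0 \geq 0$).
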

\begin{proof} 

Let $V$ be the quotient of $E_{\mathcal{L}}$ induced by the equivalence relation $\cong$. Let $\hilbert$ be the Hilbert space with elements of $V$ comprising an orthonormal basis and extending by linearity, with the typical $L^2$ inner product. Given $\alpha, \beta \in \mathcal L$, notice that $\ip{\alpha}{\beta} = 1$ if $\alpha \cong \beta$ and $0$ otherwise.

Let $f$ be the verbatim enumeration function of $\mathcal L$ with formal series
\[
f = \sum_\omega \omega.
\]

Let $c_{\beta\ad\alpha}$ be the coefficient of $\beta\ad\alpha$ term in the expansion of $f$, which is 1 if $\beta\ad\alpha \in \mathcal L$ and $0$ otherwise; that is, $c_{\beta\ad\alpha} =1$ precisely when $a \cong b$. Thus we can view $c_{\beta\ad\alpha} = \ip{\alpha}{\beta}_\hilbert$, and so $[c_{\beta\ad\alpha}]_{\alpha, \beta}$ has a Gram representation in terms of the basis elements in $V$. We conclude that $[c_{\beta\ad\alpha}]_{\alpha,\beta} \geq 0$.
\end{proof}

\begin{theorem}
Let $\mathcal L$ be a formal  language. The the verbatim enumeration function $f$ of $\mathcal L$ is matrix convex if and only if $\mathcal{L}$ is Pythagorean.
\end{theorem}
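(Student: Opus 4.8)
The plan is to obtain the theorem by composing the two preceding biconditionals, so the argument is essentially a bookkeeping reduction rather than a new computation. First I would verify that the formal sum $f = \sum_{\omega \in \mathcal L}\omega$ genuinely defines a function on a neighborhood of $0$, which is the standing hypothesis under which the royal road Lemma from \cite{royal} is stated. Since the alphabet is finite, with $N$ letters (counting the adjoint of each analytic symbol separately), there are at most $N^k$ words of length $k$, and every coefficient is $0$ or $1$; hence $\sum_{\omega\in\mathcal L}\norm{X^\omega}\le \sum_k N^k\norm{X}^k<\infty$ whenever $\norm{X}<1/N$. Thus $f$ has an absolutely and uniformly convergent power series on $D=\{X:\norm{X}\le r\}$ for any $r<1/N$, so the realization/Hankel machinery applies.

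Next I would line up the coefficient data. The coefficient of $\beta\ad\alpha$ in $f$ is $c_{\beta\ad\alpha}=1_{\mathcal L}(\beta\ad\alpha)$, so the block matrix in the royal road Lemma is exactly the matrix $C=[1_{\mathcal L}(\beta\ad\alpha)]$ treated in Lemma \ref{lemma_pos}. One convention must be reconciled: the royal road Lemma ranges $\alpha,\beta$ over all words of positive length, whereas Lemma \ref{lemma_pos} is phrased on the ellipses $E_{\mathcal L}$. This is harmless, because if $\alpha\notin E_{\mathcal L}$ then $\beta\ad\alpha\notin\mathcal L$ for every $\beta$, so the $\alpha$-indexed row and column of $C$ vanish identically; deleting such zero rows and columns affects neither positivity nor that of any principal submatrix. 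Hence the two versions of $C$ are positive semidefinite simultaneously.

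With these identifications the theorem drops out. By the royal road Lemma, $f$ is matrix convex on $D$ if and only if $C\ge 0$; by Lemma \ref{lemma_pos}, $C\ge 0$ if and only if $\mathcal L$ is Pythagorean. Composing the two equivalences gives that $f$ is matrix convex if and only if $\mathcal L$ is Pythagorean, which supplies both implications at once.

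I do not anticipate a genuinely hard step here: all of the mathematical content sits inside the two cited lemmas, where the Pythagorean axiom is precisely what forces the morphological equivalence $\cong$ to be transitive and thereby turns $C$ into a Gram matrix. The only points requiring care are the convergence check and the matching of index conventions, both of which are routine consequences of the $0$-$1$ coefficient structure and the finiteness of the alphabet; were the alphabet infinite, the convergence step would have to be revisited, so I would flag finiteness explicitly.
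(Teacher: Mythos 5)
Your proof is correct and follows essentially the same route as the paper: the paper's proof likewise reduces the theorem to combining Lemma 2.2 (the royal-road Hankel criterion) with Lemma \ref{lemma_pos}, the only cosmetic difference being that the paper re-runs the construction of the Kraus/butterfly realization from positivity of $C$ (rather than citing the reverse implication of Lemma 2.2 as a black box) to obtain matrix convexity in the Pythagorean direction. Your added care about convergence of the series on a small ball around $0$ and about reconciling the index sets (words of positive length versus ellipses, via deleting zero rows and columns) is more explicit than anything in the paper, but it fills in details rather than changing the argument.
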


\begin{proof}
Essentially, the proof is as in \cite[Theorem 4.4]{royal}, where the positivity of $[c_{\beta\ad\alpha}]$ is used to construct a Kraus representation. Lemma \ref{lemma_pos} allows the same argument to be executed without the assumption of matrix convexity, which implies that $f$ has a Kraus representation \eqref{eq_kraus}. The structure of the representation directly implies that $f$ is matrix convex.
\end{proof}

We note that the butterfly realizations of such functions have $T_i$ given by the adjacency matrix/operator for the $a_i$-edges.
\section{Some examples}\label{sec_examples}
The simplest example is the Schur complement of a matrix, which, it goes without saying, arises ubiquitously. We illustrate the graph of the Schur complement in Figure \ref{figschur}.
\begin{figure}[H]
\centering
\begin{tikzpicture}[scale=2]
  \tikzset{
    vertex/.style = {circle, draw, fill=black, inner sep=0pt, minimum size=4pt},
  }

  \node[vertex] (v) at (0,0) {};
  \draw (v) to [out=120,in=30,looseness=100] (v);

\end{tikzpicture}
\caption{The undirected loop graph on one vertex corresponds to the Schur complement of $\bbm A & B \\ C & D -1  \ebm$, that is, $A-B(D-1)^{-1}C$, where the edge is $D$-colored.}
\label{figschur}
\end{figure}
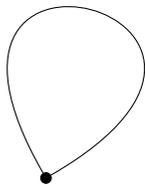

A second example arises in the work of Ando on numerical range. The central result in \cite{ando73} is that for any matrix $Z$ with numerical radius less than or equal to $\frac{1}{2}$, there exists a maximum matrix $Y$ such that
	$$\bpm 1-Y & Z \\ Z^* & Y \epm \geq 0$$
which is then used to obtain dilation theoretic type results. This example is illustrated in Figure \ref{figando}.
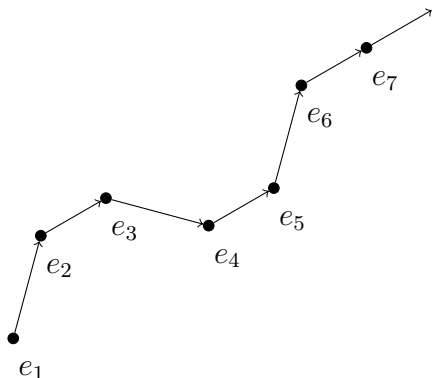
\begin{figure}[H]
\centering
\begin{tikzpicture}[rotate=30]
  \tikzset{
    vertex/.style = {circle, draw, fill=black, inner sep=0pt, minimum size=4pt},
  }

  \foreach \x in {0, 1, ..., 6} {
    \pgfmathtruncatemacro{\label}{\x + 1}
    \pgfmathtruncatemacro{\y}{mod(\x, 4) == 0 ? 0 : mod(\x, 4) == 1 ? 1 : mod(\x, 4) == 2 ? 0 : -1}
    \node[vertex] (v\x) at (\x, \y) {};
    \node at (\x, \y-0.5) {$e_{\label}$};
  }
  
  \foreach \x [evaluate=\x as \nextx using int(\x+1)] in {0, 1, ..., 5} {
    \draw[->] (v\x) -- (v\nextx);
  }

  \draw[->] (v6) -- ++(1, 0);


\end{tikzpicture}
\caption{An infinite directed path graph corresponds to Ando's unitarization function used in analysis of the numerical radius. The Ando unitarization function satisfies $Y(Z) = Z(1-Y(Z))^{-1}Z^*.$ Note that the verbatim enumeration function gives the irreducible Dyck paths.}
\label{figando}
\end{figure}

In \cite{amt}, a vast generalization was considered. Anderson, Morley and Trapp showed that there is a function $Y(X)$ such that, given a block matrix $X=(X_{ij})_{ij},$ we have that 
$X+D\otimes Y(X)$ is positive semidefinite, D is the matrix with a $-1$ in the upper corner and $1$ on the rest of the diagonal and $Y(X)$ is the maximum solution. 
For example, in the two by two case, we seek the maximal solution $Y$ to
$$\bpm 1-Y-X & Z \\ Z^* & Y  \epm \geq 0$$
for inputs $X$ and $Z.$, where we have normalized $X_{22}=0$ and relabeled the variables to make things clearer. Figure \ref{figtrapp} describes how we may obtain the function.

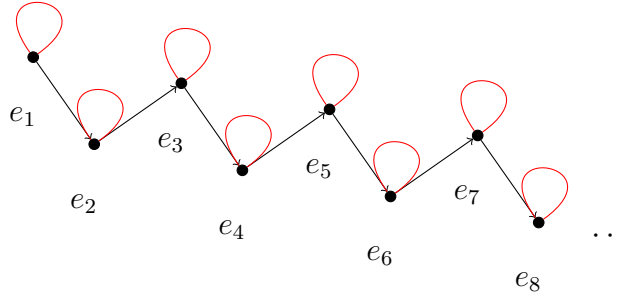
\begin{figure}[H]
\centering
\begin{tikzpicture}[rotate=-10]

  \tikzset{
    vertex/.style = {circle, draw, fill=black, inner sep=0pt, minimum size=4pt},
  }

  \foreach \x [evaluate=\x as \nextx using int(\x+1)] in {0, 1, ..., 7} {
    \pgfmathtruncatemacro{\label}{\x + 1}
    \pgfmathtruncatemacro{\y}{mod(\x, 2) == 0 ? 1 : 0}
    \node[vertex] (v\x) at (\x, \y) {};
    \node at (\x, \y-0.8) {$e_{\label}$};
  }

  \foreach \x [evaluate=\x as \nextx using int(\x+1)] in {0, 1, ..., 6} {
    \draw[->] (v\x) -- (v\nextx);
  }
  
  \foreach \x in {0, 1, ..., 7} {
    \draw[red] (v\x) to [out=135,in=45,looseness=30] (v\x);
  }

  \node at (8, 0) {$\cdots$};

\end{tikzpicture}
\caption{The two by two Anderson-Morley-Trapp function corresponds to an infinite path graph with self-loops added at each of the vertices. Note the verbatim enumeration function gives Motzkin paths. The Anderson-Morley-Trapp function satisfies $Y(Z,X) = Z(1-Y(Z,X)-X)^{-1}Z^*.$ In general, for larger sizes, we see the nodes are blown up into complete graphs.}
\label{figtrapp}
\end{figure}

\section{A Gelfand-type numerical radius formula}

Ando's unitarization function satisfies 
\[
Y(Z) = Z(1-Y(Z))^{-1}Z^*.
\]

Note that if $Y$ and $1-Y$ are invertible, then
\[
V = Y^{-1/2} Z (1-Y)^{-1/2}
\]
is an isometry. The following result is due to Ando \cite{ando73}. 

\begin{theorem}
Let $Z$ be in $\BH$. The numerical radius of $Z$ is less than or equal to $1/2$ if and only if
there exists an isometry $V$ and a positive contraction $Y$ such that 
\[
Z = Y^{1/2} V (1- Y)^{1/2}.
\]
\end{theorem}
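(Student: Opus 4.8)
The plan is to prove the two implications separately, leveraging the block-matrix description of the numerical radius that produced the Ando unitarization function above. Throughout I use the equivalent block
\[
M(Y) = \begin{pmatrix} Y & Z \\ Z^* & 1-Y \end{pmatrix},
\]
which is the arrangement whose factorization produces the asserted order $Z = Y^{1/2}V(1-Y)^{1/2}$. The sufficiency direction is a direct estimate: assuming $Z = Y^{1/2}V(1-Y)^{1/2}$ with $V$ an isometry and $0\le Y\le I$, for a unit vector $x$ I would write $\langle Zx,x\rangle = \langle V(1-Y)^{1/2}x, Y^{1/2}x\rangle$, apply Cauchy--Schwarz, and use $\|V\xi\|=\|\xi\|$ to obtain
\[
|\langle Zx,x\rangle| \le \|(1-Y)^{1/2}x\|\,\|Y^{1/2}x\| = \sqrt{t(1-t)} \le \tfrac12,
\]
where $t=\langle Yx,x\rangle\in[0,1]$; hence $w(Z)\le\tfrac12$. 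Note that a contraction $V$ already suffices here, which signals that producing an honest \emph{isometry} is the delicate content of the converse.

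For necessity I would proceed in three steps. First, reduce $w(Z)\le\tfrac12$ to the solvability of $M(Y)\ge0$ for some positive contraction $Y$: the quadratic form defining $M(Y)\ge0$, minimized over the first slot, equals $\langle[(1-Y)-Z^*Y^{-1}Z]v,v\rangle$, so positivity is the Schur-complement condition $(1-Y)-Z^*Y^{-1}Z\ge0$, and I would show such a $Y$ exists precisely when $w(Z)\le\tfrac12$ (the monotone iteration attached to the Riccati relation $Y = I - Z^*Y^{-1}Z$ stays inside $[0,I]$ exactly under the numerical radius bound; alternatively a weak-$*$ compactness and separation argument on $\{Y:0\le Y\le I\}$ yields a contradiction with $w(Z)\le\tfrac12$ if no solution existed). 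Second, pass to the \emph{maximal} such $Y$: writing $\Psi(Y) = I - Z^*Y^{-1}Z$, the feasible set is $\{Y: Y\le\Psi(Y)\}$ and $\Psi$ is operator monotone, so a Knaster--Tarski-style monotone iteration from the top produces a greatest element that is an actual fixed point $Y=\Psi(Y)$; equivalently the Schur complement $(1-Y)-Z^*Y^{-1}Z$ vanishes. Third, factor: Smul'jan's lemma applied to $M(Y)\ge0$ gives $Z = Y^{1/2}V(1-Y)^{1/2}$ with $V$ a contraction, and the identity $I - V^*V = (1-Y)^{-1/2}\big((1-Y)-Z^*Y^{-1}Z\big)(1-Y)^{-1/2}$ shows that the vanishing of the Schur complement is exactly $V^*V=I$, i.e.\ that $V$ is an isometry.

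The main obstacle is non-invertibility: on $\ker Y$ or $\ker(1-Y)$ the symbols $Y^{-1/2},(1-Y)^{-1/2}$ are undefined, so both the factorization and the isometry identity must be read on the closures of the appropriate ranges. I would route the factorization through Smul'jan's lemma (which builds the contraction $V$ on $\overline{\operatorname{ran}}\,(1-Y)^{1/2}$ without inverting anything) and justify the isometry relation by a limiting argument that regularizes the inverses and verifies $V_\varepsilon^*V_\varepsilon = I$ before passing to the limit $\varepsilon\to 0$. A secondary subtlety is that the self-adjoint operators do not form a lattice in the L\"owner order, so the ``maximal solution'' must be produced by convergence of a bounded monotone net rather than by an abstract supremum. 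In finite dimensions all of this collapses: every relevant operator is generically invertible and a one-sided isometry is automatically unitary, so the statement reduces to the scalar Riccati computation $y(1-y)=|z|^2$ performed coordinatewise.
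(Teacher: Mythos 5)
Your overall architecture for the hard direction (produce a maximal solution $Y$ of the linear matrix inequality, show its Schur complement vanishes, read off the isometry via Smul'jan's lemma) is the right shape, and your sufficiency argument is complete and correct — including the accurate observation that a contraction suffices there. But the necessity half has a genuine gap at its center: the assertion that numerical radius $\le \tfrac12$ implies the \emph{existence} of some positive contraction $Y$ with $M(Y)\ge 0$ is precisely the hard content of Ando's theorem, and you never prove it; you offer two alternatives in a parenthesis, neither executed. The primary one, the monotone iteration $Y_{n+1}=\Psi(Y_n)=I-Z\ad Y_n^{-1}Z$ started at $Y_0=I$, breaks down immediately in concrete cases: for the Jordan block $Z=\bbm 0 & 1 \\ 0 & 0 \ebm$, which has numerical radius exactly $\tfrac12$, the first iterate $Y_1=I-Z\ad Z=\diag(1,0)$ is singular, so $\Psi(Y_1)$ is undefined. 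More fundamentally, you give no argument for how the numerical radius hypothesis enters to keep the iterates bounded below: the scalar intuition does not transfer by induction, since numerical radius $\le\tfrac12$ only gives $\|Z\|\le 1$, not $Z\ad Z\le\tfrac14 I$, so an induction hypothesis like $Y_n\ge\tfrac12 I$ only yields $Y_{n+1}\ge I-2Z\ad Z$, which proves nothing. The second alternative (weak-$*$ compactness plus separation) is indeed the skeleton of Ando's actual argument, but extracting from a separating functional a unit vector violating the numerical radius bound \emph{is} the proof, and it is absent. As written, the necessity direction is a plan, not a proof.

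For comparison: the paper does not prove this statement either — it is stated as Ando's theorem with a citation — but the paper's own machinery suggests a route you could have completed with tools already on the table. For numerical radius strictly less than $\tfrac12$, the verbatim enumeration function of the irreducible Dyck paths, $Y(Z)=\sum_n p_n(Z,Z\ad)$, converges, satisfies the Riccati identity $Y=Z(1-Y)^{-1}Z\ad$ for combinatorial reasons (an irreducible Dyck path is an up-step, followed by an arbitrary concatenation of irreducible paths, followed by a down-step), and then $V=Y^{-1/2}Z(1-Y)^{-1/2}$ is the required isometry, with the boundary case handled by a limiting argument. Two of your closing remarks also deserve pushback: finite dimensionality does not make the invertibility issues ``collapse,'' since at the extremal case the maximal $Y$ can be genuinely singular (again $Z=\bbm 0 & 1 \\ 0 & 0\ebm$, where $Y=\diag(1,0)$ and $1-Y=\diag(0,1)$ are both singular); and the problem never reduces to a ``coordinatewise'' scalar Riccati equation, because $Z\ad Z$, $ZZ\ad$, and $Y$ need not commute.
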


As discussed in Figure \ref{figando}, $Y(Z)$ is the verbatim enumeration function for the irreducible Dyck paths. Now write 
\[
Y(Z) = \sum_{n} p_n(Z, Z\ad)
\]
where $p_n$ is a homogenous polynomial of degree $2n$. (Note $p_n$ enumerates the irreducible Dyck paths of length $2n$.) We see the following corollary of the main result concerning the numerical radius of a matrix $Z$ (see e.g. \cite{goldberg, tsatnum}).

\begin{corollary}
Let $Z \in \BH.$ The numerical radius of $Z$ is equal to 
	\beq\label{gelfrad}
		\frac{1}{2}\limsup_{n\rightarrow \infty} \|p_n(Z,Z\ad)\|^{1/2n}.
	\eeq
\end{corollary}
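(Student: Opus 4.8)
The plan is to reduce the corollary to a single radius-of-convergence computation by exploiting the homogeneity of the $p_n$. Since $p_n$ is homogeneous of degree $2n$ in $(Z,Z\ad)$, replacing $Z$ by $tZ$ for real $t\ge 0$ gives the operator-valued power series
\[
Y(tZ) = \sum_n t^{2n} p_n(Z,Z\ad),
\]
whose nonzero coefficients sit only in even degree. The Cauchy--Hadamard formula holds verbatim for coefficients in a Banach space, so the radius of convergence $R$ of this series (as a series in $t$) satisfies $1/R = \limsup_n \|p_n(Z,Z\ad)\|^{1/2n}$. Thus \eqref{gelfrad} is equivalent to the single assertion $R = 1/(2w(Z))$, where $w(Z)$ denotes the numerical radius; granting this, the displayed formula is immediate. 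Moreover, since $R$ is determined by convergence along the positive real axis, it suffices to prove (i) the series converges for every real $t$ with $t\,w(Z) < 1/2$, giving $R \ge 1/(2w(Z))$, and (ii) it diverges for every real $t$ with $t\,w(Z) > 1/2$, giving $R \le 1/(2w(Z))$.

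For the convergence bound (i), I would fix real $t$ with $t\,w(Z)<1/2$ and study the single-variable map $\lambda \mapsto Y(\lambda t Z)$. For complex $\lambda$ with $|\lambda|\le 1$ we have $w(\lambda t Z) = |\lambda|\,t\,w(Z) < 1/2$, so by Ando's theorem the maximal positive-contraction solution of $Y = \lambda^2 t^2 Z(1-Y)\inv Z\ad$ exists, and in fact exists on a slightly larger disk $|\lambda| < 1+\varepsilon$. The key is that this solution is analytic in $\lambda$ there: writing $G(Y,\sigma) = Y - \sigma Z(1-Y)\inv Z\ad$, the Fréchet derivative $D_Y G$ is invertible strictly inside the region $\{w<1/2\}$ (this is the same degeneracy that controls (ii)), so the implicit function theorem produces an analytic branch whose homogeneous Taylor coefficients in $\lambda$ are exactly $t^{2n}p_n(Z,Z\ad)$. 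Analyticity of $\lambda\mapsto Y(\lambda tZ)$ on a neighborhood of the closed unit disk then forces its Taylor series to converge at $\lambda = 1$, i.e.\ $\sum_n t^{2n}p_n(Z,Z\ad)$ converges; this is the balanced-domain principle, matching the royal road expansion at $0$. Equivalently one invokes that the matrix convex function $Y$ is analytic on the circular set $\{w(Z)<1/2\}$, so its homogeneous expansion converges throughout.

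For the divergence bound (ii) --- \textbf{the main obstacle} --- I would show that $\sigma \mapsto Y(\sqrt{\sigma}\,Z)$ has a genuine singularity at the critical value $\sigma_* = 1/(4w(Z)^2)$, so the power series cannot have radius of convergence exceeding $t_* = 1/(2w(Z))$. The mechanism is a fold: differentiating $G(Y,\sigma)=0$ implicitly gives $\tfrac{d}{d\sigma}Y = (D_Y G)\inv\big[Z(1-Y)\inv Z\ad\big]$, which blows up in norm precisely when $D_Y G$ degenerates. Here $D_Y G[\Delta] = \Delta - \sigma\,\mathcal L_Y[\Delta]$ with $\mathcal L_Y[\Delta] = Z(1-Y)\inv \Delta (1-Y)\inv Z\ad$ a completely positive map, and the degeneracy $1\in\operatorname{spec}(\sigma\mathcal L_Y)$ is the branch point. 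The crux, which I expect to be the hardest step, is to prove that this degeneracy occurs exactly when $w(\sqrt{\sigma}\,Z)=1/2$, i.e.\ to identify the relevant spectral radius of the completely positive linearization $\mathcal L_Y$ with the numerical radius condition at the Ando boundary. Once this is established, $\|\tfrac{d}{d\sigma}Y(\sqrt{\sigma}Z)\|\to\infty$ as $\sigma\uparrow\sigma_*$, so $Y(\sqrt{\sigma}Z)$ is not even $C^1$ across $\sigma_*$ and the convergent series of (i) cannot extend past it; hence $R=t_*$. The scalar case is the guiding model: there $Y=\tfrac{1-\sqrt{1-4\sigma|z|^2}}{2}$, the derivative is $|z|^2(1-4\sigma|z|^2)^{-1/2}$, and the square-root branch point sits exactly at $\sigma_* = 1/(4|z|^2) = 1/(4w(Z)^2)$, where $D_Y G = 1 - \sigma|z|^2(1-Y)^{-2}$ vanishes.
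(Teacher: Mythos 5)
Your reduction of \eqref{gelfrad} to the single claim that $\sum_n t^{2n}p_n(Z,Z\ad)$ has radius of convergence exactly $1/(2w(Z))$ (writing $w(Z)$ for the numerical radius) is the right framing, and your half (i) is true --- though it can be had far more cheaply than by implicit-function-theorem continuation: by the Gram/Pythagorean structure of Lemma \ref{lemma_pos}, each $p_n(Z,Z\ad)$ is positive semidefinite, so for $t\,w(Z)<1/2$ the partial sums of $\sum_n t^{2n}p_n(Z,Z\ad)$ increase and are dominated by Ando's maximal solution $Y_A\le \id$ via the monotone iteration $Y\mapsto t^2Z(\id-Y)\inv Z\ad$; in particular $\|t^{2n}p_n(Z,Z\ad)\|\le 1$ for every $n$, which is all Cauchy--Hadamard needs. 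The genuine gap is half (ii): you flag the identification ``$D_YG$ degenerates exactly when $w(\sqrt{\sigma}Z)=1/2$'' as the crux and leave it unproved, and in fact it is \emph{false}, as is the divergence claim it was meant to deliver. Take $Z=\bbm 0 & 1 \\ 0 & 0 \ebm$, so $w(Z)=1/2$. Every irreducible Dyck word of length $2n\ge 4$ begins with two up-steps, hence contains the factor $Z\cdot Z=0$; therefore $p_n(Z,Z\ad)=0$ for all $n\ge 2$, and $\sum_n t^{2n}p_n(Z,Z\ad)=t^2\bbm 1 & 0 \\ 0 & 0 \ebm$ converges for \emph{every} $t$, even when $t\,w(Z)>1/2$. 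Along this branch $Y(t)=t^2\bbm 1 & 0 \\ 0 & 0 \ebm$ solves the fixed-point equation globally, and the linearization $\Delta\mapsto t^2Z(\id-Y)\inv\Delta(\id-Y)\inv Z\ad = t^2\Delta_{22}\bbm 1 & 0 \\ 0 & 0 \ebm$ is nilpotent, so $1$ never enters its spectrum: the fold you are relying on never occurs. The underlying reason is that convergence of the series does not certify $w\le 1/2$: Ando's theorem requires a solution with $0\le Y\le \id$, and the limit of the series can violate $Y\le \id$.

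Note that this example does more than defeat your strategy: the radius of convergence is $+\infty$ while $1/(2w(Z))=1$, so by your own (correct) Cauchy--Hadamard equivalence, the right-hand side of \eqref{gelfrad} equals $0\ne 1/2=w(Z)$. Thus only the inequality $w(Z)\ge \tfrac12\limsup_n\|p_n(Z,Z\ad)\|^{1/2n}$ can be proved --- the paper itself gives no argument beyond citing the main result and Ando, and that route also only yields this inequality --- and the statement needs repair rather than a cleverer proof. A natural repair: replace the irreducible-path enumerators $p_n$ by the enumerators $q_n$ of \emph{all} Dyck paths of length $2n$. These are again positive semidefinite, the upper bound goes through as before, and for the lower bound the first-return factorization of Dyck paths gives the identity $Q\inv + (tZ)Q(tZ)\ad = \id$ for $Q=\sum_n t^{2n}q_n(Z,Z\ad)\ge \id$, valid strictly inside the radius of convergence; Cauchy--Schwarz and the arithmetic--geometric mean inequality then give, for every unit vector $v$, $|\ip{(tZ)\ad v}{v}|\le \tfrac12\ip{\bigl((tZ)Q(tZ)\ad+Q\inv\bigr)v}{v}=\tfrac12$, i.e.\ $w(tZ)\le 1/2$. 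This yields $w(Z)=\tfrac12\limsup_n\|q_n(Z,Z\ad)\|^{1/2n}$ for every $Z\in\BH$; in the nilpotent example $q_n(Z,Z\ad)=\bbm 1 & 0 \\ 0 & 0 \ebm$ for all $n$, and the corrected formula is exact.
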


The first few polynomials $p_n$ are given by
\begin{align*}
p_2(Z, Z\ad) &= Z Z\ad \\
p_4(Z, Z\ad) &= Z Z Z\ad Z\ad \\
p_6(Z, Z\ad) &= Z Z Z Z\ad Z\ad Z\ad + Z Z Z\ad Z Z\ad Z\ad \\
p_8(Z, Z\ad) &= Z Z Z Z Z\ad Z\ad Z\ad Z\ad + Z Z Z Z\ad Z Z\ad Z\ad Z\ad \\
& \hspace{1cm}+ Z Z Z Z\ad Z\ad Z Z\ad Z\ad + Z Z Z\ad Z Z\ad Z Z\ad Z\ad + Z Z Z\ad Z Z Z\ad Z\ad Z\ad.
\end{align*}

For example, consider the matrix
\[
A = \bbm 0& 2 & 2 \\ 3 & 1 & 0 \\ 1 & 0 & 1\ebm.
\]
Applying \eqref{gelfrad}, numerical computations give that
\[
\frac{1}{2}  \| p_{1500}(A, A\ad) \|^{1/3000} \approx 3.445,
\]
with a convergence plot for values of $n$ up to $500$ shown in Figure \ref{fig:convplot}. 
Numerically solving using optimization gives
\[
\max_{\|v\|=1} \ip{A v}{v} \approx 3.458.
\]

\begin{figure}[htbp]
  \includegraphics[scale=.4]{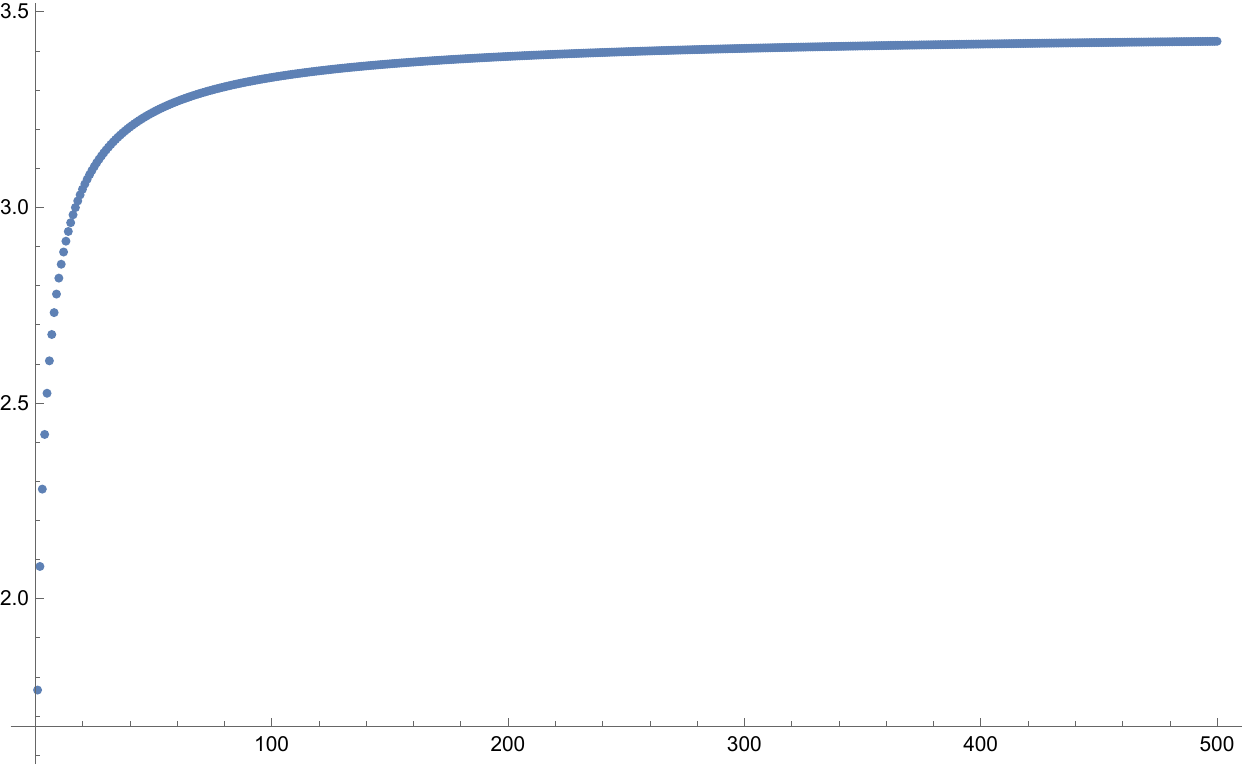}
  \caption{Convergence to the numerical radius}
  \label{fig:convplot}
\end{figure}

\newpage 
\bibliography{references}
\bibliographystyle{plain}


\end{document}